\documentclass[10pt]{article}
\usepackage[pagebackref,colorlinks=true]{hyperref}
\usepackage{amssymb, amsfonts, amsthm, mathrsfs}
\usepackage{graphicx, xcolor}
\usepackage[all]{xy}
\pagestyle{plain}
\usepackage[leqno]{amsmath}

\theoremstyle{definition}
\newtheorem{thm}{Theorem}[section]

\newtheorem{prop}[thm]{Proposition}

\newtheorem{rmk}[thm]{Remark}
\theoremstyle{definition}
\newtheorem{ex}[thm]{Example}

\setlength{\textheight}{220mm} \setlength{\textwidth}{155mm}
\setlength{\oddsidemargin}{5mm} \setlength{\evensidemargin}{5mm}
\setlength{\topmargin}{-10mm}

\makeatletter

\def\id{\mathrm{id}}

\def\tr{\mathrm{Tr}}
\def\ccc{\mathbb{C}}

\def\zz{\mathbb{Z}}
\def\rr{\mathbb{R}}
\def\pp{\mathbb{P}}

\def\oo{\mathbb{O}}

\def\clo{\mathcal{O}}

\def\frI{\mathfrak{I}}
\def\pt{\partial}
\def\bpt{\overline{\pt}}

\def\ud{\mathrm{d}}

\def\bz{\overline{z}}

\def\bphi{\overline{\varphi}}
\def\bzeta{\overline{\zeta}}
\def\brho{\overline{\rho}}

\def\ima{\mathrm{Im}}

\makeatother

\title{A Construction of Non-K\"ahler Calabi-Yau Manifolds and New Solutions to the Strominger System}
\author{Teng Fei}

\begin{document}
\maketitle{}

\section{Introduction}

This paper is a follow-up of an observation made in \cite{fei2015b}. Nevertheless, I would like to say a few words about the general background first.

We call a compact complex manifold a Calabi-Yau, to be K\"ahler or not, if its canonical bundle is holomorphically trivial. Calabi-Yau manifolds, especially Calabi-Yau 3-folds, have been extensively studied ever since Yau's solution to the Calabi conjecture \cite{yau1977, yau1978}. Among many other enthralling problems, motivated from either geometry or string theory, there stands out the moduli problem. There exists a vast collection of literature devoted to understanding the moduli space of Calabi-Yau manifolds. We refer to the beautiful survey paper \cite{yau2009} and the references therein for more details. In our context, we would like to single out what is usually called the ``Reid's fantasy''. In \cite{reid1987}, Reid made the wild conjecture that all the reasonably nice compact 3-folds with trivial canonical bundle (including non-K\"ahler ones necessarily) can be connected with each other via conifold transitions. Reid's idea was supported by the work of Candelas-de la Ossa \cite{candelas1990}, where explicit Ricci-flat K\"ahler metrics are found on both deformed and resolved conifolds and their limiting behaviors are analyzed.

In order to globalize Candelas-de la Ossa's result, it is natural to ask the question that how canonical we can choose Hermitian metrics on compact non-K\"ahler Calabi-Yau manifolds. The first way to deal with this problem is to understand how to generalize the Calabi-Yau theorem in the non-K\"ahler setting. In particular, the balanced version of Gauduchon conjecture remains to be solved. A consequence of this conjecture is that on complex balanced manifolds with trivial canonical bundle one can always find a balanced metric within a suitable cohomology class such that its first Bott-Chern form vanishes. Progress was made by Sz\'ekelyhidi-Tosatti-Weinkove \cite{szekelyhidi2015} in this direction.

A second approach to this problem is to solve the Strominger system. This is a system of PDEs proposed by Strominger \cite{strominger1986} in the study of heterotic strings with torsion which we shall formulate. Let $(X^3,g,J)$ be an Hermitian 3-fold (not necessarily K\"ahler) with holomorphically trivial canonical bundle and let $\Omega$ be a nowhere-vanishing holomorphic $(3,0)$-form on $X$. We denote the positive $(1,1)$-form associated with $g$ by $\omega$ and the curvature form of $(T_\ccc X,g)$ by $R$. In addition, let $(E,h)$ be a holomorphic vector bundle with metric over $X$ and let $F$ be its curvature form. The Strominger system consists of the following equations:
\begin{eqnarray}
\label{hym}F\wedge\omega^2=0,\quad F^{0,2}=F^{2,0}=0,\\
\label{ac}\sqrt{-1}\pt\bpt\omega=\frac{\alpha'}{4}(\tr(R\wedge R)-\tr(F\wedge F)),\\
\label{cb}\ud(\|\Omega\|_\omega\cdot\omega^2)=0.
\end{eqnarray}
Equations (\ref{hym}), (\ref{ac}) and (\ref{cb}) are known as the Hermitian-Yang-Mills equation, the anomaly cancellation equation and the conformally balanced equation respectively.

If we assume that $\omega$ is K\"ahler and take $E=T_\ccc X$, the anomaly cancellation (\ref{ac}) is automatic. It follows that the whole system is reduced to an equation requiring $g$ to be Ricci-flat. In this sense, the Strominger system generalizes the complex Monge-Amp\`ere equation used in K\"ahler geometry. Therefore we may regard solutions to Strominger system as canonical metrics, even on a non-K\"ahler background. Actually, the first method mentioned above can be incorporated into this picture. It is well-known that for $X$ with trivial canonical bundle, the conformally balanced condition (\ref{cb}) is equivalent to that the restricted holonomy of $X$ with respect to the Strominger-Bismut connection is contained in $SU(3)$, see \cite[Section 2]{strominger1986} and \cite[Lemma 3.1]{li2005}. However, this is very far from requiring $\Omega$ to be parallel under the Strominger-Bismut connection. It turns out the latter condition is equivalent to that $\|\Omega\|_\omega$ is a constant and $\omega$ is a balanced metric, i.e., solving the balanced version of Gauduchon conjecture.

The Strominger system is very hard to solve in general. The reason lies in the fact that the anomaly cancelation equation (\ref{ac}) is an equation of 4-forms. After certain perturbative solutions described in \cite{strominger1986}, the first smooth irreducible solutions for $U(4)$ or $U(5)$ principal bundles over K\"ahler Calabi-Yau's were due to Li and Yau \cite{li2005}. Later, a set of genuine non-K\"ahler solutions were obtained by Fu and Yau \cite{fu2008}. A great deal of work has been done in recent years, see \cite{fernandez2009}, \cite{fu2009}, \cite{andreas2012}, \cite{anderson2014}, \cite{ossa2014}, \cite{fei2015}, \cite{garciafernandez2015} and the references therein.

Following \cite{michelsohn1982}, we say an Hermitian metric $\omega$ on a complex $n$-fold is \emph{balanced} if $\ud(\omega^{n-1})=0$. As a consequence of (\ref{cb}), $X$ must support a balanced metric. Thanks to the work of \cite{clemens1983}, \cite{friedman1986} and \cite{fu2012}, there are lots of non-K\"ahler Calabi-Yau 3-folds with balanced metric which can be obtained by taking a sequence of conifold transitions starting from a projective Calabi-Yau 3-fold. Other techniques like branched double covering \cite{lin2014} may also be useful and we refer to the survey paper \cite{tosatti2015} for more constructions. However, it seems that for most of these constructions, the Strominger system is way too hard to attack. As far as the author knows, the only successes were made on those $T^2$-bundles over K3 surfaces constructed by \cite{goldstein2004} and quotients of various Lie groups over lattices.

This paper is divided into two parts. In Part \ref{p1}, following the observation made in \cite{fei2015b}, we provide a new way to find non-K\"ahler manifolds which generalizes the classical construction of Calabi \cite{calabi1958} and Gray \cite{gray1969}. In particular, we obtain a series of non-K\"ahler Calabi-Yau 3-folds with natural balanced metric. These 3-folds are holomorphic fiber bundles over Riemann surfaces of genus $g\geq3$ with hyperk\"ahler fibers. For Part \ref{p2}, we make use of this fibration structure to write down a suitable ansatz for the Strominger system. By solving an algebraic equation, we obtain explicit degenerate solutions to the Strominger system with $F=0$. Along the way we also prove that our models do not admit any pluriclosed metrics, answering a question of Fu-Wang-Wu \cite[Section 1]{fu2013}. Finally we make a discussion about the geometry of degeneracy loci.

\subsection*{Acknowledgement}
The author would like to thank Prof. Shing-Tung Yau and Prof. Victor Guillemin for their constant encouragement and help. The conversation with Prof. Claude LeBrun on theta characteristics is extremely inspiring. The author is also indebted to various helpful discussions and correspondence with Li-Sheng Tseng, Bao-Sen Wu, Peter Smillie and Cheng-Long Yu. Finally the author thanks Valentino Tosatti and Eirik Svanes for pointing out several important references.

\part{Geometric Construction}\label{p1}

\section{The Classical Construction of Calabi and Gray}\label{s2}

For any oriented immersed hypersurface $X$ in $\rr^7$, Calabi \cite{calabi1958} discovered that $X$ automatically admits an almost complex structure $J_0$ from the following construction. Let us identify $\rr^7$ with $\ima(\oo)$, the space of purely imaginary octonions. There is a cross product $\times$ defined on $\ima(\oo)$ which can be expressed by \[a\times b=a\cdot b+\langle a,b\rangle,\] where $\cdot$ is the octonion multiplication and $\langle-,-\rangle$ is the standard inner product on $\oo$. For any point $p\in X$, the almost complex structure $(J_0)_p:T_pX\to T_pX$ is defined by \[(J_0)_p(v)=\nu_p\times v,\quad\forall v\in T_pX,\] where $\nu_p$ is the unit positive normal of $M$ at $p$ with respect to the standard metric and orientation. Using properties of cross product on $\ima(\oo)$, it is not hard to check that $J_0^2=-\id$ and we get an almost complex structure.

Calabi also derived the condition for $J_0$ to be integrable. In particular, he proved that if $\Sigma$ is an oriented immersed minimal surface in $\rr^3$, then the almost complex structure induced on $X=\Sigma\times\rr^4\looparrowright \rr^3\times\rr^4=\ima(\oo)$ is integrable. Using such construction, he gave the first example that the Chern classes of a complex manifold is not determined by its underlying smooth structure, answering a question of Hirzebruch.

Calabi's construction was later generalized by Gray \cite{gray1969} to manifolds with vector cross product. That is, one can replace $\rr^7$ by any manifolds with a $G_2$-structure. In particular, if $\Sigma_g$ is an oriented minimal surface of genus $g$ in flat $T^3$ and $M$ is a compact hyperk\"ahler 4-manifold (either $T^4$ or a K3 surface), then the induced $J_0$ on $X=\Sigma_g\times M\looparrowright T^3\times M$ is integrable. Notice that though diffeomorphically $X$ is a product, the holomorphic structure on $X$ is twisted, as long as $\Sigma_g$ is not totally geodesic, i.e., a flat subtorus. Such minimal surfaces $\Sigma_g$ exist for and only for every $g\geq3$, see \cite[Corollary 3.1, Corollary 10.1]{meeks1990} and \cite[Theorem 1]{traizet2008}.

It was further observed in \cite[Theorem 5.1]{fei2015b} that these $X$ are actually compact non-K\"ahler Calabi-Yau's. The argument goes as follows. As the fundamental 3-form $\varphi$ of the $G_2$-manifold $T^3\times M$ is closed, its restriction on $X$ is also closed. On the other hand, the restricted 3-form is the real part of a complex (3,0)-form $\Omega$. Using the fact $J_0$ is integrable, we conclude that $\Omega$ is holomorphic and therefore trivializes the canonical bundle of $X$.

There is also a natural induced metric on $X$ from the ambient $G_2$-manifold $T^3\times M$. It is classically known \cite{gray1969} that this metric $\omega$ is Hermitian and balanced. In addition, $\|\Omega\|_\omega$ is of constant length and therefore $\Omega$ is parallel under both the Chern connection and the Strominger-Bismut connection. Thus $(X,\omega)$ is a \emph{special balanced 3-fold}, using the terminology from \cite{fei2015b}.

Moreover, the fibration \[\pi:(\Sigma_g\times M,J_0)\to\Sigma_g\] is holomorphic with holomorphic sections of the form $\Sigma_g\times\{\textrm{pt}\}$.

\section{Explicit Calculation}

In this section, we lay the foundation for future calculations.

Let us first write down the complex structure $J_0$ explicitly. Let $e_1,e_2,e_3$ be an orthonormal basis of parallel vector fields on $T^3$ and let $e^1,e^2,e^3$ be the dual 1-forms. Fix $I,J,K$ a set of pairwise anti-commuting complex structures on the hyperk\"ahler manifold $M$, and denote the associated K\"ahler forms by $\omega_I$, $\omega_J$ and $\omega_K$ respectively. Let $G:\Sigma_g\to S^2\subset\rr^3$ be the Gauss map and write its components as \[G(z)=(\alpha(z),\beta(z),\gamma(z))\in\rr^3,\quad\forall z\in\Sigma_g.\]  Thus \[\nu(z)=\alpha(z)e_1+\beta(z)e_2+\gamma(z)e_3.\] Notice that the cross product on $\Sigma_g\times M$ is determined by the fundamental 3-form \[\varphi=e^1\wedge\omega_I+e^2\wedge\omega_J+e^3\wedge\omega_K-e^1\wedge e^2\wedge e^3.\footnote{It should be noted that the orientation induced on $T^3$ differs from the standard one by a sign.}\] It is not hard to see that
\begin{equation}\label{complex structure}
\begin{split}J_0e_1&=-\gamma e_2+\beta e_3,\\ J_0e_2&=\gamma e_1-\alpha e_3,\\ J_0e_3&=-\beta e_1+\alpha e_2,\\ J_0v&=\alpha Iv+\beta Jv+\gamma Kv,\end{split}
\end{equation}
for arbitrary vector field $v$ tangent to fibers of $\pi:X\to\Sigma_g$.

The action of $J_0$ on forms can be obtained easily as follow \[\begin{split}J_0e^1&=\gamma e^2-\beta e^3,\\ J_0e^2&=-\gamma e^1+\alpha e^3,\\ J_0e^3&=\beta e^1-\alpha e^2,\\J_0\omega_I&=(2\alpha^2-1)\omega_I+2\alpha\beta\omega_J+2\alpha\gamma\omega_K,\\ J_0\omega_J&=2\alpha\beta\omega_I+(2\beta^2-1)\omega_J+2\beta\gamma\omega_K,\\ J_0\omega_K&=2\alpha\gamma\omega_I+2\beta\gamma\omega_J+(2\gamma^2-1)\omega_K.\end{split}\] Denote by $\omega_0$ the induced metric on $X$ from $T^3\times M$, clearly,
\begin{equation}\label{metric}
\omega_0=\omega+\alpha\omega_I+\beta\omega_J+\gamma\omega_K,
\end{equation}
where $\omega$ is the induced K\"ahler metric on $\Sigma_g$.

Up to now we have not used the fact that $\Sigma_g$ is minimal. Let $f:D\to \Sigma_g\subset\rr^3$ given by \[(u,v)\mapsto(f_1(u,v),f_2(u,v),f_3(u,v))\] be an isothermal parametrization of $\Sigma_g$ compatible with its orientation. Let $z=u+iv$ and \[\varphi_j=\frac{\pt f_j}{\pt u}-i\frac{\pt f_j}{\pt v}\] for $j=1,2,3$. It is a well-known fact that $\Sigma_g$ is a minimal surface is equivalent to that $\varphi_j$ are holomorphic functions and \[\varphi_1^2+\varphi_2^2+\varphi_3^2=0.\] Setting \[2\lambda=\varphi_1\bphi_1+\varphi_2\bphi_2+\varphi_3\bphi_3,\] we can easily express $\alpha,\beta,\gamma$ as \[\begin{split}-2i\lambda\alpha&=\varphi_2\bphi_3-\varphi_3\bphi_2,\\ -2i\lambda\beta&=\varphi_3\bphi_1-\varphi_1\bphi_3,\\ -2i\lambda\gamma&=\varphi_1\bphi_2-\varphi_2\bphi_1.\end{split}\] Without too much effort, one can check that \[\varphi_1^{-1}\frac{\pt\alpha}{\pt\bz}=\varphi_2^{-1}\frac{\pt\beta}{\pt\bz}=\varphi_3^{-1}\frac{\pt\gamma}{\pt\bz}.\] We also have the relations
\begin{equation}\label{relation}
\begin{split}-i\frac{\pt\alpha}{\pt\bz}&=\beta\frac{\pt\gamma}{\pt\bz}-\gamma\frac{\pt\beta}{\pt\bz}\\ -i\frac{\pt\beta}{\pt\bz}&=\gamma\frac{\pt\alpha}{\pt\bz}-\alpha\frac{\pt\gamma}{\pt\bz},\\ -i\frac{\pt\gamma}{\pt\bz}&=\alpha\frac{\pt\beta}{\pt\bz}-\beta\frac{\pt\alpha}{\pt\bz}.\end{split}
\end{equation}

\section{Generalizations}

In this section, we will generalize the classical construction of Calabi and Gray. A first observation is that the recipe (\ref{complex structure}) used for producing the complex structure $J_0$ make sense for any hypercomplex manifold $M$. More precisely, let $N$ be a hypercomplex manifold of complex dimension $2n$, i.e., a smooth manifold of real dimension $4n$ endowed with three integrable complex structures $I$, $J$ and $K$ satisfying \[I^2=J^2=K^2=IJK=-\id.\] Let $\Sigma_g$ be an oriented minimal surface immersed in $T^3$ as before, then we can define an almost complex structure $J_0$ on $\Sigma_g\times N$ using exactly the formula (\ref{complex structure}). A straightforward calculation of Nijenhuis tensor shows that $J_0$ is integrable, due to the relations we derived in (\ref{relation}). If we further assume that $N$ is actually hyperk\"ahler, then the associated Hermitian forms $\omega_I$, $\omega_J$ and $\omega_K$ are closed. Notice that the naturally induced metric $\omega_0$ on $\Sigma_g\times N$ is still of the form (\ref{metric}). We see immediately that \[\omega_0^{2n}=(\alpha\omega_I+\beta\omega_J+\gamma\omega_K)^{2n}+2n\cdot \omega\wedge(\alpha\omega_I+\beta\omega_J+\gamma\omega_K)^{2n-1}\] is $\ud$-closed, hence $\omega_0$ is a balanced metric.

\begin{rmk}
Hypercomplex manifolds form a strictly larger category than hyperk\"ahler manifolds. For instance, in real dimension 4, besides $T^4$ and K3 surfaces, we should also include the Hopf surfaces into the hypercomplex list, see \cite{boyer1988}.
\end{rmk}

Our next observation relates the above construction to the twistor space of hypercomplex manifolds. Recall from \cite[Section 3(F)]{hitchin1987} and \cite[Section 1]{kaledin1998} that the twistor space of a hypercomplex manifold $N$ is constructed as follows. Parameterize $S^2=\{(\alpha,\beta,\gamma)\in\rr^3:\alpha^2+\beta^2+\gamma^2=1\}$ by $\zeta\in\ccc\pp^1$ via \[(\alpha,\beta,\gamma)= \left(\frac{1-|\zeta|^2}{1+|\zeta|^2},\frac{\zeta+\bzeta}{1+|\zeta|^2},\frac{i(\bzeta-\zeta)}{1+|\zeta|^2}\right).\footnote{The $\gamma$ component here differs from \cite{hitchin1987} by a sign. If we follow the convention that $IJ=K$ on vector fields, then $IJ=-K$ on 1-forms. With this understood, one detects a sign issue in \cite{hitchin1987}'s calculation. It turns out our formula above gives the right complex structure.}\] The twistor space $Z$ of $N$ is defined to be the manifold $Z=\ccc\pp^1\times N$ with the almost complex structure $\frI$ given by \begin{equation}\label{hypertwistor}
\frI=j\oplus(\alpha I_x+\beta J_x+\gamma K_x)
\end{equation}
at point $(\zeta,x)\in\ccc\pp^1\times N$, where $j$ is the standard complex structure on $\ccc\pp^1$ with holomorphic coordinate $\zeta$. It is a theorem of \cite{hitchin1987} and \cite{kaledin1998} that $\frI$ is integrable and the projection $p:Z\to\ccc\pp^1$ is a holomorphic fibration.

Recall that for an oriented minimal surface $\Sigma_g$ in $T^3$, the Gauss map $G:\Sigma_g\to\ccc\pp^1$ written in above coordinate \[z\mapsto\zeta(z)\] is holomorphic. Comparing the definition of $J_0$(\ref{complex structure}) and $\frI$(\ref{hypertwistor}), we conclude immediately that $J_0$ is exactly the pullback of $\frI$ by the Gauss map $G$. More precisely, we have the pullback square \[\xymatrix{(\Sigma_g\!\times\! N,J_0)\cong G^*\!Z\ar[r]^{\quad\quad\tilde{G}}\ar[d]_\pi & (Z,\frI)\ar[d]^p\\ \Sigma_g\ar[r]^G & \ccc\pp^1}\] All the maps in this diagram are holomorphic.

This is a very useful viewpoint in practice. For instance, it allows us to compute the first Chern class of $(\Sigma_g\times N,J_0)$. Let $F_\pi=\ker\ud\pi$ and $F_p=\ker\ud p$ respectively, and we have two short exact sequences of bundles \[\begin{split}&0\to F_\pi\to TG^*\!Z\xrightarrow{\ud\pi}\pi^*T\Sigma_g\to0,\\ &0\to F_p\to TZ\xrightarrow{~\ud p~}p^*T\ccc\pp^1\to 0.\end{split}\] From the pullback square, we see that \[F_\pi=\tilde{G}^*F_p,\] therefore we conclude that \[c_1(\Sigma_g\times N)=c_1(F_\pi)+\pi^*c_1(\Sigma_g)=\pi^*c_1(\Sigma_g)+\tilde{G}^*c_1(F_p).\] It was proved in \cite[Theorem 3.3]{hitchin1987} that $\wedge^2F_p^*\otimes p^*\clo(2)$ has a section which defines a holomorphic symplectic form on each fiber of $p$.\footnote{As long as we are only concerned with topology instead of holomorphic structure, this result is available to hypercomplex manifold as well.} It follows that \[c_1(F_p)=p^*c_1(\clo(2n)).\] Let $\mu$ be the positive generator of $H^2(\Sigma_g,\zz)$ and identify it with $\mu\otimes1\in H^*(\Sigma_g\times N,\zz)$, we conclude that \[c_1(\Sigma_g\times N)=(g-1)(2n-2)\mu,\] where we have used the simple fact that $\deg G=g-1$ as a consequence of Gauss-Bonnet.

We see immediately that when $n=1$, i.e., $N$ is a hypercomplex 4-manifold, then $c_1(\Sigma_g\times N)=0$. This is consistent with our result mentioned in Section \ref{s2}. We also conclude that $c_1(\Sigma_g\times N)$ can always be represented by a nonnegative class, which is good enough for us to prove the following theorem.

\begin{thm}{\label{nonk}}
$(\Sigma_g\times N,J_0)$ does not support any K\"ahler metric.
\end{thm}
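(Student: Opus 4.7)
The plan is to assume, for contradiction, that $X=(\Sigma_g\times N, J_0)$ admits a K\"ahler form $\omega_X$, and then to exploit the holomorphic fibration $\pi\colon X\to\Sigma_g$ together with the surjectivity of the Gauss map $G\colon\Sigma_g\to S^2$. First I would restrict $\omega_X$ to a fiber $F_z=\pi^{-1}(z)$, which is a complex submanifold biholomorphic to $(N,J_z)$ with $J_z=\alpha(z)I+\beta(z)J+\gamma(z)K$. The restriction $\tau_z:=\omega_X|_{F_z}$ is then K\"ahler on $F_z$, and since $\omega_X$ is closed and $\pi$ is smoothly trivializable over $\Sigma_g$, the cohomology class $\Theta:=[\tau_z]\in H^2(N;\mathbb{R})$ is independent of $z$ (standard Poincar\'e-lemma argument along the fibration).

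Next I would use the fact that for a minimal surface of genus $g\geq 3$ in $T^3$ the Gauss map has degree $g-1\geq 2$ and is therefore surjective onto $S^2$. Consequently $\Theta$ must simultaneously be a K\"ahler class on $(N,J)$ for every $J=\alpha I+\beta J+\gamma K$ with $(\alpha,\beta,\gamma)\in S^2$, i.e., for every member of the hypercomplex family on $N$.

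The heart of the proof is to show that no class in $H^2(N;\mathbb{R})$ can remain K\"ahler under this entire $S^2$-family of complex structures. In the hyperk\"ahler case I would use the Beauville-Bogomolov form $q$ on $H^2(N;\mathbb{R})$, which has signature $(3,b_2-3)$ with positive 3-plane spanned by $[\omega_I],[\omega_J],[\omega_K]$. The condition that $\Theta$ be of type $(1,1)$ with respect to every $J$ in the family, combined with the compatibility $q(H^{1,1},H^{2,0})=0$ of $q$ with the Hodge structure, forces $q(\Theta,[\omega_I])=q(\Theta,[\omega_J])=q(\Theta,[\omega_K])=0$, placing $\Theta$ in the negative definite orthogonal complement, so $q(\Theta)\leq 0$. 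But every K\"ahler class on a compact hyperk\"ahler manifold satisfies $q(\Theta)>0$: for $n=1$ this is just $\int_N\Theta^2>0$ for a K\"ahler form on a complex surface, and for general $n$ it follows from Fujiki's relation $\int_N\Theta^{2n}=c_N q(\Theta)^n$ combined with the fact that the K\"ahler cone lies in one connected component of $\{q>0\}$. This is the desired contradiction.

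The main obstacle is making the argument uniform across all of the hypercomplex $N$'s covered by the construction. If $N$ is hypercomplex but not hyperk\"ahler (e.g.\ a Hopf surface), then typically none of the $(N,J_z)$ is K\"ahler at all, so the restriction step immediately produces a K\"ahler form on a non-K\"ahler manifold and the argument terminates trivially; for reducible hyperk\"ahler $N$ one reduces to irreducible factors. In the irreducible hyperk\"ahler case, the delicate point is the signature argument: one has to verify that as $J$ ranges over all of $S^2$ the real 2-planes $(H^{2,0}_J+H^{0,2}_J)_\mathbb{R}$ sweep out the full positive 3-plane $\mathrm{span}([\omega_I],[\omega_J],[\omega_K])$, so that the orthogonality constraint pins $\Theta$ down to the negative part of $q$. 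It is precisely the hypothesis $g\geq 3$, via surjectivity of $G$, that makes this possible.
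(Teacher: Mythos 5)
Your approach is genuinely different from the paper's and, in its core case, viable. The paper argues on the base: it computes $c_1(\Sigma_g\times N)=(g-1)(2n-2)\mu\geq 0$, invokes the Calabi--Yau theorem to replace a hypothetical K\"ahler metric by one of nonnegative Ricci curvature, and then applies Yau's generalized Schwarz lemma to the nonconstant holomorphic projection $\pi$ onto the hyperbolic curve $\Sigma_g$ (this is where $g\geq 3$ enters) to get a contradiction. You instead argue on the fibers: restriction of the K\"ahler class to $(N,J_{G(z)})$, constancy of that class in $z$, surjectivity of the Gauss map, and the signature of the Beauville--Bogomolov form. For $N$ irreducible hyperk\"ahler this is a complete and attractive argument --- in fact you need only two non-proportional complex structures in the image of $G$, not the whole $S^2$, since type-$(1,1)$-ness with respect to $I$ alone already forces $q(\Theta,[\omega_J])=q(\Theta,[\omega_K])=0$ --- and for $N=T^4$ or a K3 surface the role of $q$ is played by the intersection form, so the paper's principal examples are covered. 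Your method localizes the obstruction in the fiber geometry rather than in the hyperbolicity of the base, which is conceptually illuminating and buys a statement that does not depend on the Calabi conjecture.

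As a proof of the theorem as stated, however, there are two genuine gaps. First, the theorem is asserted for an arbitrary compact hypercomplex $N$ (the paper explicitly includes Hopf surfaces in this class), and your treatment of the non-hyperk\"ahler case rests on the word ``typically'': you would need either to exhibit a non-K\"ahler fiber or to invoke a theorem (e.g.\ Verbitsky's result that a compact hypercomplex manifold which is K\"ahler for one induced complex structure admits a compatible hyperk\"ahler metric) to reduce to the hyperk\"ahler case; as written this step is not a proof. Second, ``for reducible hyperk\"ahler $N$ one reduces to irreducible factors'' skips the torus factors of the Bogomolov decomposition, for which the Beauville--Bogomolov form is not defined; for $T^{4n}$ with $n>1$ one must check by hand that no constant-coefficient $2$-form can be of type $(1,1)$ for two anticommuting complex structures and simultaneously positive for one of them (true, but a separate linear-algebra lemma), and the passage to a finite cover and back also needs a sentence. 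The paper's route avoids all of this case analysis, uniformly over hypercomplex $N$, at the cost of invoking Yau's theorem and the Schwarz lemma.
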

To prove this theorem, we need to use Yau's generalized Schwarz lemma \cite{yau1978b}, which says
\begin{thm}(Yau, \cite[Theorem 2]{yau1978b})\\
Let $P_1$ be a complete K\"ahler manifold with nonnegative Ricci curvature and let $P_2$ be an Hermitian manifold with holomorphic bisectional curvature bounded from above my a negative constant. Then any holomorphic map from $P_1$ to $P_2$ must be a constant.
\end{thm}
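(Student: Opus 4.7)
The plan is to argue by contradiction. Suppose $\tilde\omega_0$ is a Kähler form compatible with $J_0$. I will use the Chern class computation just carried out, namely $c_1(\Sigma_g\times N)=(g-1)(2n-2)\mu$, with $\mu$ pulled back from the positive generator of $H^2(\Sigma_g,\zz)$. The crucial observation is that this class admits a smooth nonnegative $(1,1)$-representative: for $n\geq 2$ one may take $(g-1)(2n-2)\pi^*\omega_\Sigma$, where $\omega_\Sigma$ is any positive $(1,1)$-form on $\Sigma_g$ of total mass $1$, while for $n=1$ the class is zero and the zero form works.

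With this nonnegative representative $\rho$ of $2\pi c_1(\Sigma_g\times N)$ in hand, Yau's solution to the Calabi conjecture produces a Kähler metric $\tilde\omega$ in the class $[\tilde\omega_0]$ whose Ricci form equals $\rho$. Consequently $(\Sigma_g\times N,\tilde\omega)$ is a compact---hence complete---Kähler manifold with $\mathrm{Ric}(\tilde\omega)\geq 0$. The point is that Yau's theorem imposes no positivity requirement on $\rho$, only that it represent the correct cohomology class, which is what allows us to feed in a merely semipositive form.

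Since $g\geq 3$, the Riemann surface $\Sigma_g$ carries a hyperbolic metric $\omega_{\mathrm{hyp}}$ of constant Gauss curvature $-1$. In complex dimension one, holomorphic bisectional curvature coincides with Gauss curvature, so $\omega_{\mathrm{hyp}}$ has holomorphic bisectional curvature bounded above by the negative constant $-1$. Applying Yau's generalized Schwarz lemma (quoted immediately above) to the holomorphic projection $\pi\colon(\Sigma_g\times N,\tilde\omega)\to(\Sigma_g,\omega_{\mathrm{hyp}})$ forces $\pi$ to be constant, contradicting its surjectivity.

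The main conceptual point, rather than a technical obstacle, is the realization that the argument needs only \emph{semipositivity} of $c_1$, not strict positivity: the $n=1$ case (where $c_1=0$) and the $n\geq 2$ case (where $c_1$ is only nef, being pulled back from a curve) are handled uniformly once one notes that the Calabi conjecture lets one prescribe any representative of $c_1$ as the Ricci form. Everything else---choosing a hyperbolic metric on $\Sigma_g$, identifying Gauss curvature with holomorphic bisectional curvature in complex dimension one, and checking that $\pi$ is holomorphic---is immediate from the construction in Section \ref{s2}.
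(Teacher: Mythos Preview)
Your proposal does not address the stated theorem. The statement you were given is Yau's generalized Schwarz lemma, which the paper quotes from \cite{yau1978b} without proof; there is no argument in the paper to compare against. What you have actually written is a proof of Theorem~\ref{nonk}---that $(\Sigma_g\times N,J_0)$ admits no K\"ahler metric---and you invoke the Schwarz lemma as a black box in the last paragraph. If the Schwarz lemma itself were the target, your argument would be circular.

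If, as seems likely, your intended target was Theorem~\ref{nonk}, then your proof is correct and follows exactly the paper's route: assume a K\"ahler metric exists, observe that $c_1(\Sigma_g\times N)$ has a nonnegative representative, invoke Yau's solution of the Calabi conjecture to produce a K\"ahler metric of nonnegative Ricci curvature, and apply the Schwarz lemma to the holomorphic projection $\pi$ onto the hyperbolic surface $\Sigma_g$. You supply more detail than the paper---naming an explicit semipositive representative $(g-1)(2n-2)\pi^*\omega_\Sigma$, and remarking that in complex dimension one holomorphic bisectional curvature is Gauss curvature---but the strategy and every essential step are the same.
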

Now we proceed to prove Theorem \ref{nonk}
\begin{proof}
Assume that $\Sigma_g\times N$ admits a K\"ahler metric, then by Yau's solution of Calabi conjecture \cite{yau1977, yau1978}, we may choose the K\"ahler metric to have nonnegative Ricci curvature since $c_1(\Sigma_g\times N)$ is the Ricci form. On the other hand, as $g\geq3$, we know that $\Sigma_g$ admits a hyperbolic metric. This contradicts with Yau's Schwarz lemma since $\pi:\Sigma_g\times N\to\Sigma_g$ is not a constant.
\end{proof}
\begin{rmk}
Yau's Schwarz lemma was later generalized by Tosatti \cite{tosatti2007} to the case that $P_1$ is a complete almost Hermitian manifold. In this case, one should use the 2nd Ricci curvature with respect to the so-called canonical connection, which coincides with the Chern connection in the integrable case. Our example of non-K\"ahler Calabi-Yau shows that one cannot replace the 2nd Ricci curvature by the 1st Ricci curvature.
\end{rmk}
We next prove that
\begin{thm}
$\Sigma_g\times N$ admits balanced metrics.
\end{thm}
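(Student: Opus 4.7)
My plan is to follow the template used in the hyperk\"ahler case earlier in this section, with the essential new input appearing at the last step.

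First, by averaging any Riemannian metric on $N$ over $\{\pm\mathrm{id},\pm I,\pm J,\pm K\}$, I would produce a hyperhermitian metric $g_N$ on $N$ whose fundamental $2$-forms $\omega_I,\omega_J,\omega_K$ satisfy $\omega_L^{2n}/(2n)! = \vol_{g_N}$ for every unit $L = aI+bJ+cK$. With any K\"ahler metric $\omega$ on $\Sigma_g$ (say the hyperbolic one), set
\[\omega_0 := \pi^*\omega + \alpha\omega_I + \beta\omega_J + \gamma\omega_K.\]
A pointwise check identical to that for (\ref{metric}) shows that the fiber piece $\omega_F := \alpha\omega_I + \beta\omega_J + \gamma\omega_K$ is precisely the Hermitian form of $(N, g_N)$ relative to the fiber complex structure $L(z)$, so $\omega_0$ is a positive $J_0$-Hermitian $(1,1)$-form.

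Second, I would compute $\omega_0^{2n}$. Because $(\pi^*\omega)^2 = 0$ on the surface $\Sigma_g$,
\[\omega_0^{2n} = \omega_F^{2n} + 2n\,\pi^*\omega\wedge\omega_F^{2n-1}.\]
Hyperhermiticity forces $\omega_F^{2n}=(2n)!\,\pi_N^*\vol_{g_N}$, which is a pullback from $N$ and hence $\ud$-closed. For the cross term, $\ud(\pi^*\omega\wedge\omega_F^{2n-1})=\pi^*\omega\wedge\ud\omega_F^{2n-1}$; any summand of $\ud\omega_F^{2n-1}$ containing a factor of $\ud\alpha,\ud\beta$, or $\ud\gamma$ dies upon wedging with $\pi^*\omega$ because $\Sigma_g$ is $2$-dimensional. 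Everything then reduces to the fiberwise identity
\[\pi^*\omega \wedge \omega_F^{2n-2} \wedge \bigl(\alpha\,\ud\omega_I + \beta\,\ud\omega_J + \gamma\,\ud\omega_K\bigr) = 0.\]

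Third, handle this fiberwise residue; this is where the main difficulty lies. For hyperk\"ahler $N$ the identity is automatic since $\ud\omega_I=\ud\omega_J=\ud\omega_K=0$, but for a generic hypercomplex $N$ it is not. My plan is to refine the choice of $g_N$ to a ``balanced hyperhermitian'' metric, i.e.\ one for which $\omega_L^{2n-1}$ is $\ud$-closed for every unit direction $L$ in the twistor family. By multilinearity in $(\alpha,\beta,\gamma)\in S^2$, this condition reduces to $\ud\omega_I^{2n-1}=\ud\omega_J^{2n-1}=\ud\omega_K^{2n-1}=0$ together with the mixed compatibility relations forced by the quaternionic symmetry of $g_N$. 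Producing such a metric on an arbitrary compact hypercomplex $N$ -- either directly by a normalization argument, or by pulling back a suitable $\ud$-closed $(2n,2n)$-form from the twistor space $Z$ along the holomorphic map $\tilde G$ of the previous section -- is the substantive step; once it is in hand, both pieces of $\omega_0^{2n}$ are $\ud$-closed and $\omega_0$ is the desired balanced metric.
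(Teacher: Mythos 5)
There is a genuine gap, and it sits exactly where you locate it: the ``substantive step'' of producing a hyperhermitian metric $g_N$ with $\ud(\omega_L^{2n-1})=0$ for every unit $L=\alpha I+\beta J+\gamma K$ is not carried out, and in fact no such metric exists in general, so your route cannot be completed. Your reduction is correct as far as it goes: the cross term is closed iff $\pi^*\omega\wedge\omega_F^{2n-2}\wedge(\alpha\,\ud\omega_I+\beta\,\ud\omega_J+\gamma\,\ud\omega_K)=0$, and since the Gauss map is surjective onto $S^2$, this forces $(N,L)$ to be balanced for \emph{every} $L$ in the twistor sphere. Take $N$ to be a Hopf surface (explicitly listed in the paper as a hypercomplex, non-hyperk\"ahler example with $n=1$): there $2n-1=1$, so your condition reads $\ud\omega_L=0$, i.e.\ the Hopf surface would have to be K\"ahler with respect to each $L$ --- which it is not. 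So the explicit ansatz $\omega_0=\pi^*\omega+\alpha\omega_I+\beta\omega_J+\gamma\omega_K$ is balanced essentially only in the hyperk\"ahler case, which is the case already settled at the start of the section; the new content of the theorem is precisely the hypercomplex case that your argument does not reach.

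The paper's proof takes a different route that avoids this obstruction. It invokes Tomberg's theorem that the twistor space $Z$ of a compact hypercomplex manifold admits a balanced metric, and pulls back the corresponding closed positive $(2n,2n)$-form along $\tilde G$ (you gesture at this as a way to build a metric on $N$, but the point is to work directly with the closed $(2n,2n)$-form on $\Sigma_g\times N$ and never ask for positivity of a fiberwise metric on $N$ itself). The price is that $G$ is a branched cover, so the pulled-back $(2n,2n)$-form degenerates in the vertical directions along the fibers over ramification points. The real technical content of the proof --- entirely absent from your proposal --- is the repair of this degeneracy: for each ramification point $q$ one adds a small multiple of the closed form $i\pt\bpt\bigl(f_t\,(\alpha\omega_I+\beta\omega_J+\gamma\omega_K)^{2n-1}\bigr)$ with $f_t=(1+|z|^2)^t$ near $q$, which for $t$ large is strictly positive in the vertical direction at $q$; the sum is then a closed strictly positive $(2n,2n)$-form, hence a balanced metric by Michelsohn's criterion. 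If you want to salvage your write-up, you would need to replace your third step with an argument of this corrective type rather than an existence claim for balanced hyperhermitian metrics on $N$.
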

\begin{proof}
An explicit balanced metric has been constructed at the beginning of this section in the case that $N$ is hyperk\"ahler. However, we will present a proof that is also applicable to the hypercomplex setting.

In \cite{tomberg2015}, Tomberg proved that $Z$ admits a balanced metric. If we pull-back this form back to $\Sigma_g\times N$ via $\tilde{G}$, then we get a nonnegative (1,1)-form $\omega_0$ whose $2n$-th power is closed. Notice that $G:\Sigma_g\to\ccc\pp^1$ is a branched cover, therefore $\omega_0^{2n}$, identified as a (1,1)-form via a volume form on $\Sigma_g\times N$, is degenerate at those fibers of $\pi$ over ramification points of $G$, only in the vertical direction.

To remedy this problem, for each ramification point $q\in\Sigma_g$, we construct a closed $(2n,2n)$-form $\lambda_q$ on $\Sigma_g\times N$ as following. Let $z$ be a local coordinate on $\Sigma_g$ with $z(q)=0$. Let $f_t$ be a real smooth function on $\Sigma_g$ such that $f_t$ is given by \[f_t(z)=(1+|z|^2)^t\] near $q$. Consider the closed real $(2n,2n)$-form \[\lambda_q=i\pt\bpt(f_t(\alpha\omega_I+\beta\omega_J+\gamma\omega_K)^{2n-1}).\] Evaluate it at $q$, we see that \[\lambda_q(q)=it\ud z\wedge\ud\bz\wedge(\alpha\omega_I+\beta\omega_J+\gamma\omega_K)^{2n-1} +i\pt\bpt(\alpha\omega_I+\beta\omega_J+\gamma\omega_K)^{2n-1}.\] Therefore for $t$ big enough, $\lambda_q(q)$ identified as a (1,1)-form, is strictly positive definite in the vertical direction.

Now let \[\lambda_0=\omega_0^{2n}+\sum_q c_q\lambda_q,\] where $c_q$ are positive constants. Our argument shows that if we choose $c_q$ close to 0, then $\lambda_0$ is a closed positive $(2n,2n)$-form on $\Sigma_g\times N$, hence we prove our theorem.

It should be noticed that the only fact about $G$ we used in the proof is that it is a branched covering.
\end{proof}

Now let us assume that $N$ is hyperk\"ahler and we replace $G:\Sigma_g\to\ccc\pp^1$ by an arbitrary holomorphic map $h:Y\to\ccc\pp^1$. Let $\widetilde{Y}$ be the total space of the pullback fibration $h^*Z$. Our computation above implies that \[K_{\widetilde{Y}}\cong K_Y\otimes h^*\clo(-2n),\] where $K_Y$ and $K_{\widetilde{Y}}$ represent the canonical bundle of $Y$ and $\widetilde{Y}$ respectively. Use an argument similar to \cite[Corollary 1]{tomberg2015}, we actually have proved
\begin{thm}\label{CY}
If $h:Y\to\pp^1$ satisfies
\begin{equation}\label{pullback}
K_Y\cong h^*\clo(2n),
\end{equation}
then $\widetilde{Y}=h^*Z$ has trivial canonical bundle. As long as $h$ is not a constant map, $\widetilde{Y}$ is non-K\"ahler.
\end{thm}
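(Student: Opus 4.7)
\medskip
\noindent\textbf{Proof proposal.}
The first assertion is essentially automatic. The formula
\[K_{\widetilde Y}\cong\tilde\pi^*\bigl(K_Y\otimes h^*\clo(-2n)\bigr)\]
was derived just before the statement from the short exact sequence $0\to F_{\tilde\pi}\to T\widetilde Y\to\tilde\pi^*TY\to 0$ together with Hitchin's identification $\det F_p^*\cong p^*\clo(-2n)$, where $\tilde\pi:\widetilde Y\to Y$ is the pullback projection. Substituting the hypothesis $K_Y\cong h^*\clo(2n)$ reduces the right-hand side to $\tilde\pi^*\clo_Y\cong\clo_{\widetilde Y}$, and an explicit trivializing section of $K_{\widetilde Y}$ can be produced by pulling back the tautological section that trivializes the tensor product on $Y$.

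For the non-K\"ahler assertion the Yau--Schwarz argument of Theorem~\ref{nonk} is unavailable since we no longer assume anything about the curvature of $Y$. The plan, as the statement suggests, is to adapt Tomberg's argument that the twistor space $Z$ itself admits no K\"ahler metric. Suppose for contradiction that $\Omega$ is a K\"ahler form on $\widetilde Y$. Because $h$ is non-constant, there is an open set $U\subset Y$ on which $h$ is a submersion, and over $U$ the fibers of $\tilde\pi$ are all diffeomorphic to the hyperk\"ahler $N$, but equipped with the rotating family of twistor complex structures $\alpha(y)I+\beta(y)J+\gamma(y)K$ where $(\alpha(y),\beta(y),\gamma(y))=h(y)$ genuinely varies. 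Restricting $\Omega$ to each fiber thus yields a family of K\"ahler forms on $N$, one for each of these distinct complex structures.

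The contradiction I expect to extract is a rigidity obstruction on $\Omega$. As $y$ moves in $U$, the $(1,1)$-subspace of $\Lambda^2T^*N$ rotates under the $\mathrm{Sp}(1)$-action induced by the hyperk\"ahler structure; the requirement that $\Omega|_{\mathrm{fiber}_y}$ stay of type $(1,1)$ and positive for each such rotated complex structure, combined with the closedness and the horizontal variation of $\Omega$ on the total space, should force algebraically incompatible conditions on the restriction of $\Omega$ at any single fiber. The key lemma will be a representation-theoretic statement about the $\mathrm{Sp}(1)$-isotypic decomposition of $\Lambda^2T^*N$, ruling out the existence of a form that remains $(1,1)$ and positive along a genuine arc of complex structures in the twistor sphere while being compatible with a globally closed extension.

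The main obstacle will be making this rigidity step precise: one must interlock the horizontal variation of $\Omega$ on $\widetilde Y$ with the rotation of the complex structure downstairs, and turn it into a pointwise algebraic obstruction. A secondary subtlety is how the ramification of $h$ interacts with the argument; the cleanest approach is likely to run the contradiction on the unramified open locus (which is nonempty whenever $h$ is non-constant) and then propagate by continuity.
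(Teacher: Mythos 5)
The first half of your argument is fine and is what the paper does: the identity $K_{\widetilde{Y}}\cong K_Y\otimes h^*\clo(-2n)$ (pulled back to $\widetilde{Y}$) was established just before the statement, and substituting (\ref{pullback}) trivializes $K_{\widetilde{Y}}$; no explicit section is needed beyond the line bundle isomorphism.

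The non-K\"ahler half has a genuine gap: the step you defer (``making this rigidity step precise'') is the entire content of the claim, and the pointwise mechanism you are hoping for is not the one that works. If $\Omega$ is a K\"ahler form on $\widetilde{Y}$, its restriction to the fiber over $y$ is a \emph{different} $2$-form on $N$ for each $y$, so nothing forces a single form to be of type $(1,1)$ and positive for an arc of complex structures; consequently no pointwise statement about the $\mathrm{Sp}(1)$-decomposition of $\Lambda^2T^*N$ applies directly, and trying to interlock this with the horizontal variation of $\Omega$ is looking in the wrong place. The argument behind \cite[Corollary 1]{tomberg2015}, which the paper invokes, is cohomological and bypasses the horizontal variation entirely. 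Since $\widetilde{Y}$ is diffeomorphic to $Y\times N$, all fibers of $\widetilde{Y}\to Y$ are homologous, so the de Rham class $\eta=[\Omega|_{N_y}]\in H^2(N,\rr)$ is independent of $y$. Because $h$ is non-constant and $Y$ is compact, the image of $h$ is all of $\ccc\pp^1$, so this single class $\eta$ is a K\"ahler class on $(N,\alpha I+\beta J+\gamma K)$ for every $(\alpha,\beta,\gamma)\in S^2$; passing to the $g$-harmonic representative (which is independent of the choice of $I_\zeta$), $\eta$ lies in the $\mathrm{Sp}(1)$-invariant part of $H^2(N,\rr)$. For a compact hyperk\"ahler manifold that subspace pairs non-positively against itself (for $T^4$ and K3 it is the anti-self-dual part, where $\int_N\eta\wedge\eta\leq0$; in general one uses the Beauville--Bogomolov form and the Fujiki relation), contradicting the positivity of $\int_N\eta^{2n}$ forced by $\eta$ being a K\"ahler class. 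Note also that your proposed retreat to the unramified locus of $h$ is unnecessary: once one works with cohomology classes of fibers, ramification plays no role.
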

\begin{rmk}
A similar construction was used by LeBrun \cite{lebrun1999} for different purposes.
\end{rmk}

If (\ref{pullback}) is satisfied, then $L=h^*\clo(n)$ is a square root of $K_Y$, which corresponds to a spin structure on $Y$ according to Atiyah \cite{atiyah1971}. $L$ is known as a theta characteristic in the case that $Y$ is a curve. The minimal surface $\Sigma_g$ we considered is a special case of such construction with $n=1$. For $Y$ a curve and $n=1$, such an $h$ exists if and only if there is a theta characteristic $L$ on $Y$ such that $h^0(Y,L)\geq2$, i.e., $L$ is a vanishing theta characteristic.

We shall see that Theorem \ref{CY} is indeed a more general construction compared to Calabi-Gray. However, we do not have any natural metric on $\widetilde{Y}$ from this consideration.

\begin{ex}
Let $Y$ be a smooth $g=3$ curve and set $n=1$. It is well known, see \cite[Section 4]{gross2004} for instance, that $Y$ admits a vanishing theta characteristic if and only if $Y$ is hyperelliptic. Hyperelliptic genus 3 curves have a moduli of complex dimension 5, while it seems that the largest family of minimal genus 3 curves in $T^3$ we know is of real dimension 5 constructed in \cite[Theorem 7.1]{meeks1990}.
\end{ex}

\begin{ex}
For every hyperelliptic curves $Y$ of genus $g\geq3$, vanishing theta characteristics exist, so Theorem \ref{CY} can be used to construct non-K\"ahler Calabi-Yau 3-folds. However, it is a theorem of Meeks, see \cite[Theorem 3.3]{meeks1990}, that if $g$ is even, $Y$ can not be minimally immersed in $T^3$. From this we see that Theorem \ref{CY} yields examples cannot be covered by Calabi-Gray.

Actually, the set of genus $g$ curves with a vanishing theta characteristic form a divisor in the moduli space of genus $g$ curves. More refined results of this type can be found in \cite{harris1982} and \cite{teixidor1987}.
\end{ex}
\begin{ex}
If we allow $Y$ to be of higher dimension and $n$ to be greater than 1, then Theorem \ref{CY} can be used to construct lots more non-K\"ahler Calabi-Yau manifolds of higher dimension. For instance, we can take $Y\subset\ccc\pp^1\times\ccc\pp^r$ to be a smooth hypersurface of bidegree $(2n+2,r+1)$, then (\ref{pullback}) is satisfied, where $h$ is the restriction of the projection to $\ccc\pp^1$. There are also numerous examples of elliptic fibrations over $\ccc\pp^1$ without multiple fibers such that (\ref{pullback}) holds. These examples lead to simply-connected non-K\"ahler Calabi-Yau's of complex dimension not less than 4.
\end{ex}
\begin{rmk}
It has been known for many years that the Iitaka conjecture fails for general compact complex manifolds, see \cite[Remark 4]{nakamura1973} and \cite[Example 2]{magnusson2012}. It seems that all the counterexamples the author can find in literature involve non-K\"ahler manifolds with negative Kodaira dimension. On the other hand, the fibration $\pi:\widetilde{Y}\to Y$ we constructed in Theorem \ref{CY} has the property that $\kappa(\pi_y)=\kappa(\widetilde{Y})=0$ while $\kappa(Y)>0$, hence
\[\kappa(\widetilde{Y})<\kappa(Y)+\kappa(\pi_y),\]
violating the assertion of Iitaka conjecture.
\end{rmk}

\part{A Degenerate Solution to the Strominger System}\label{p2}

In Part \ref{p1} we constructed various non-K\"ahler Calabi-Yau manifolds with balanced metric. They are natural testing ground for heterotic strings. In Part \ref{p2} of this paper, we will only consider the simplest case, i.e., $X=\Sigma_g\times T^4$, where $\Sigma_g$ is a minimal genus $g$ Riemann surface in $T^3$ and $T^4$ is the real 4-torus with standard hyperk\"ahler structure. As we have seen in Section \ref{s2} that $X$ is a non-K\"ahler Calabi-Yau 3-fold whose natural metric is specially balanced. We will try to solve Strominger system on $X$ based on the idea from \cite{fu2008}.

\section{More Complex Geometry}\label{frame}

Recall that the Strominger system
\begin{eqnarray*}
F\wedge\omega^2=0,\quad F^{0,2}=F^{2,0}=0\\
\sqrt{-1}\pt\bpt\omega=\frac{\alpha'}{4}(\tr~R\wedge R-\tr~F\wedge F)\\
\ud(\|\Omega\|_\omega\cdot\omega^2)=0
\end{eqnarray*}
involves curvature forms $F$ and $R$. In this work, they will be computed with respect to the Chern connection. To do that, it is convenient to work with a holomorphic frame.

Let $e^4,e^5,e^6,e^7$ be a set of parallel orthonormal 1-forms on $T^4$ and we will use the convention \[\begin{split}\omega_I&=e^4\wedge e^5+e^6\wedge e^7,\\ \omega_J&=e^4\wedge e^6-e^5\wedge e^7,\\ \omega_K&=e^4\wedge e^7+e^5\wedge e^6.\end{split}\] In terms of this frame, it is straightforward to write down the holomorphic (3,0)-form $\Omega=\Omega_1+i\Omega_2$ where \[\begin{split}\Omega_1&=e^1\wedge\omega_I+e^2\wedge\omega_J+e^3\wedge\omega_K,\\ \Omega_2&=(-\gamma e^2+\beta e^3)\wedge\omega_I+(\gamma e^1-\alpha e^3)\wedge\omega_J+(-\beta e^1+\alpha e^2)\wedge\omega_K.\end{split}\]

Now we proceed to solve for a local holomorphic frame on $(X,J_0)$. It is easier to work with 1-forms. Consider a (1,0)-form $\theta$ of the form \[\theta=L\ud z+Ae^4+Be^5+Ce^6+De^7,\] where $z$ is a local holomorphic coordinate on $\Sigma_g$ while $L,A,B,C,D$ are smooth functions to be determined.

As $J_0\theta=i\theta$, it follows that \[\begin{split}iA&=\alpha B+\beta C+\gamma D,\\ iB&=-\alpha A+\gamma C-\beta D,\\ iC&=-\beta A-\gamma B+\alpha D,\\ iD&=-\gamma A+\beta B-\alpha C.\end{split}\] Solve $A$ and $B$ from $C$, $D$, we get \begin{equation}\label{rel2}
\begin{split}A&=-\frac{\alpha\gamma+i\beta}{\beta^2+\gamma^2}C+\frac{\alpha\beta-i\gamma}{\beta^2+\gamma^2}D=-\kappa C+\sigma D,\\ B&=\frac{\alpha\beta-i\gamma}{\beta^2+\gamma^2}C+\frac{\alpha\gamma+i\beta}{\beta^2+\gamma^2}D=\sigma C+\kappa D,\end{split}
\end{equation}
where \[\kappa=\frac{\alpha\gamma+i\beta}{\beta^2+\gamma^2}=\frac{i}{2}\left(\zeta+\frac{1}{\zeta}\right) \textrm{ and }\sigma=\frac{\alpha\beta-i\gamma}{\beta^2+\gamma^2}=-\frac{1}{2}\left(\zeta-\frac{1}{\zeta}\right)\] are holomorphic functions.

If $\theta$ is a holomorphic (1,0)-form, then \[\ud\theta=\ud L\wedge\ud z+\ud A\wedge e^4+\ud B\wedge e^5+\ud C\wedge e^6+\ud D\wedge e^7\] is of type (2,0), which is equivalent to that \[J_0(\ud\theta)=-\ud\theta.\] As a consequence, we have \[\begin{split}&(\ud A+\alpha J_0\ud B+\beta J_0\ud C+\gamma J_0\ud D)\wedge e^4+(\ud B-\alpha J_0\ud A+\gamma J_0\ud C-\beta J_0\ud D)\wedge e^5\\ +&(\ud C-\beta J_0\ud A-\gamma J_0\ud B+\alpha J_0\ud D)\wedge e^6+(\ud D-\gamma J_0\ud A+\beta J_0\ud B-\alpha J_0\ud C)\wedge e^7+2\bpt L\wedge\ud z\\ =&0.\end{split}\]

Plug in (\ref{rel2}), we can compute \[\begin{split}\ud A+\alpha J_0\ud B+\beta J_0\ud C+\gamma J_0\ud D&=-2\kappa\bpt C+2\sigma\bpt D+C(i\alpha\pt\sigma-\pt\kappa)+D(\pt\sigma+i\alpha\pt\kappa),\\ \ud B-\alpha J_0\ud A+\gamma J_0\ud C-\beta J_0\ud D&=2\sigma\bpt C+2\kappa\bpt D+C(\pt\sigma+i\alpha\pt\kappa)-D(i\alpha\pt\sigma-\pt\kappa),\\ \ud C-\beta J_0\ud A-\gamma J_0\ud B+\alpha J_0\ud D&=2\bpt C+iC(\beta\pt\kappa-\gamma\pt\sigma)-iD(\beta\pt\sigma+\gamma\pt\kappa),\\ \ud D-\gamma J_0\ud A+\beta J_0\ud B-\alpha J_0\ud C&=2\bpt D+iC(\beta\pt\sigma+\gamma\pt\kappa)-iD(\gamma\pt\sigma-\beta\pt\kappa).\end{split}\] Therefore \[\begin{split}&2\bpt L\wedge\ud z+2\bpt C\wedge(-\kappa e^4+\sigma e^5+e^6)+2\bpt D\wedge(\sigma e^4+\kappa e^5+e^7)\\ +&(C\pt\sigma+D\pt\kappa)\wedge(i\alpha e^4+e^5-i\gamma e^6+i\beta e^7)+(C\pt\kappa-D\pt\sigma)\wedge(-e^4+i\alpha e^5+i\beta e^6+i\gamma e^7)\\ =&0.\end{split}\] Each term in the above equation is a (1,1) form. Notice that \[\{\ud z, -\kappa e^4+\sigma e^5+e^6, \sigma e^4+\kappa e^5+e^7\}\] form a basis for (1,0)-forms, so we deduce that $\bpt C=\bpt D=0$ and \[2\bpt L=(C\sigma_z+D\kappa_z)(i\alpha e^4+e^5-i\gamma e^6+i\beta e^7)+(C\kappa_z-D\sigma_z)(-e^4+i\alpha e^5+i\beta e^6+i\gamma e^7),\] which is always locally solvable since the right hand side is $\bpt$-closed.

Therefore we conclude that \[\{\ud z, L_1\ud z-\kappa e^4+\sigma e^5+e^6, L_2\ud z+\sigma e^4+\kappa e^5+e^7\}\]  is a local holomorphic frame of $T^*_\ccc X$, where $L_1$ and $L_2$ are functions satisfying
\begin{equation}\label{dbar}
\begin{split}
2\bpt L_1&=\sigma_z(e^5+iJ_0e^5)-\kappa_z(e^4+iJ_0e^4)=\frac{2i\alpha_z}{\beta^2+\gamma^2}(e^7+iJ_0e^7),\\2\bpt L_2&=\kappa_z(e^5+iJ_0e^5)+\sigma_z(e^4+iJ_0e^4)=-\frac{2i\alpha_z}{\beta^2+\gamma^2}(e^6+iJ_0e^6).\end{split}
\end{equation} After taking dual basis and rescaling, we obtain a holomorphic frame of $T_\ccc X$ as follows
\begin{equation}\label{holf}
\begin{split}V_1&=i\beta e_4+i\gamma e_5+e_6-i\alpha e_7=e_6-iJ_0e_6,\\ V_2&=i\gamma e_4-i\beta e_5+i\alpha e_6+e_7=e_7-iJ_0e_7,\\ V_0&=2\frac{\pt}{\pt z}-L_1V_1-L_2V_2.\end{split}
\end{equation}
Observe that $V_1$ and $V_2$ are globally defined and nowhere vanishing. Similarly $e_4-iJ_0e_4$ and $e_5-iJ_0e_5$ are nowhere vanishing holomorphic vector fields on $X$.

At point where $(\alpha,\beta,\gamma)=(1,0,0)$, we have $V_1+iV_2=0$. Similarly at point where $(\alpha,\beta,\gamma)=(-1,0,0)$, we see $V_1-iV_2=0$. As the Gauss map is surjective, we conclude that as holomorphic vector fields, both $V_1+iV_2$ and $V_1-iV_2$ have zeroes.

In \cite[Theorem 1]{lebrun1994}, LeBrun and Simanca proved that on a compact K\"ahler manifold, the set of holomorphic vector fields with zeroes is actually a vector space. Hence we obtain a different proof that $X$ is non-K\"ahler. In fact, we can prove a little more:
\begin{prop}\label{ddbar}
$X$ does not satisfy the $\pt\bpt$-lemma. As a corollary, $X$ is not of Fujiki class $\mathcal{C}$.
\end{prop}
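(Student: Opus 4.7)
The plan is to upgrade the preceding Kähler argument to the $\partial\bpt$ setting. The identity
\[2V_1=(V_1+iV_2)+(V_1-iV_2)\]
exhibits the nowhere-vanishing holomorphic vector field $V_1=e_6-iJ_0 e_6$ (nowhere zero because its real part $e_6$ is a unit parallel vector field on the $T^4$-factor) as a complex-linear combination of two holomorphic vector fields that each vanish somewhere. Consequently, the set $\frh_0\subset H^0(X,TX)$ of holomorphic vector fields admitting at least one zero \emph{fails} to be a complex vector subspace of $H^0(X,TX)$.

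The key input I would invoke is an extension of \cite[Theorem 1]{lebrun1994} from the Kähler to the $\partial\bpt$ setting: on a compact complex manifold satisfying the $\partial\bpt$-lemma, $\frh_0$ \emph{is} a complex vector subspace of $H^0(X,TX)$. The Kähler argument rests on a Hodge-theoretic splitting of the dual $(0,1)$-form $V^\flat$ into a $\bpt$-exact piece and a harmonic piece, together with the fact that $V$ admits a zero if and only if the harmonic piece vanishes; since the splitting is linear, $\frh_0$ is a subspace. Under the $\partial\bpt$-lemma, the analogous splitting is available relative to any Gauduchon metric via Bott-Chern harmonic theory and the symmetric decomposition $H^1(X,\ccc)=H^{1,0}\oplus\overline{H^{1,0}}$, yielding the same conclusion. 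Combining with the preceding paragraph, $X$ cannot satisfy the $\partial\bpt$-lemma.

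The corollary then follows from the standard fact that every compact manifold in Fujiki class $\mathcal{C}$ satisfies the $\partial\bpt$-lemma, a consequence of the bimeromorphic invariance of the $\partial\bpt$-property and the Deligne-Griffiths-Morgan-Sullivan theorem in the Kähler case. The main technical obstacle is pinning down the LeBrun-Simanca extension, namely verifying that the criterion ``$V$ has a zero iff its harmonic part vanishes'' survives the passage to non-Kähler $\partial\bpt$-manifolds, where ``parallel harmonic part'' must be reformulated in Bott-Chern terms and the zero/nowhere-zero dichotomy for the harmonic summand must be reestablished without appeal to a Kähler metric.
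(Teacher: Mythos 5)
Your unconditional observations are all correct and indeed appear in the paper: $V_1=e_6-iJ_0e_6$ is nowhere vanishing, $V_1\pm iV_2$ vanish along the fibers over $G^{-1}(\pm1,0,0)$, and hence the set of zero-admitting holomorphic vector fields is not a linear subspace of $H^0(X,T_\ccc X)$. The gap is the key lemma you invoke to convert this into a failure of the $\pt\bpt$-lemma. The LeBrun--Simanca criterion is anchored on contracting $V$ with a \emph{closed} positive $(1,1)$-form: it is precisely $\ud\omega=0$, together with the holomorphy of $V$, that makes $\iota_V\omega$ a $\bpt$-closed $(0,1)$-form, hence possessed of a Dolbeault class and a harmonic part whose vanishing detects the zeros of $V$. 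On a non-K\"ahler $\pt\bpt$-manifold there is no closed positive $(1,1)$-form at all, and for a Gauduchon or balanced metric $\iota_V\omega$ is not $\bpt$-closed, so there is no cohomology class to decompose; the symmetric splitting $H^1(X,\ccc)=H^{1,0}\oplus\overline{H^{1,0}}$ that the $\pt\bpt$-lemma does provide concerns \emph{closed} $1$-forms and has no bearing on $\iota_V\omega$. So the asserted ``$\pt\bpt$-version of LeBrun--Simanca'' is neither a citable theorem nor something your sketch produces, and as written the argument does not close. (The paper itself invokes \cite[Theorem 1]{lebrun1994} only in the K\"ahler category, to give a second proof of non-K\"ahlerness, and then explicitly switches to a different argument for the $\pt\bpt$-lemma.)

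For comparison, the paper's proof is a two-line numerical count that you could have extracted from the same data: pairing a holomorphic $1$-form $\xi$ with the global holomorphic vector fields $V_1,V_2$ gives holomorphic, hence constant, functions (and the pairing with the combinations $V_1\pm iV_2$, which have zeros, must vanish identically), which forces $\xi$ to be the pullback of a holomorphic $1$-form on $\Sigma_g$; thus $h^{1,0}(X)=g$. Since $X$ is diffeomorphic to $\Sigma_g\times T^4$, one has $b_1(X)=2g+4$, whereas the $\pt\bpt$-lemma would force $b_1=h^{1,0}+h^{0,1}=2h^{1,0}=2g$. This contradiction is the intended proof, and the Fujiki class $\mathcal{C}$ corollary then follows exactly as you state.
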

\begin{proof}
Let $\xi$ be a holomorphic (1,0)-form on $X$. Since its pairing with $V_1$ and $V_2$ are constants, one can easily deduce that $\xi$ must be pullback of a holomorphic (1,0)-form from $\Sigma_g$. Hence $h^0(X,\Omega^1)=h^{1,0}(X)=g$. On the other hand, $b_1(X)=2g+4>2g=2h^{1,0}(X)$, therefore the $\pt\bpt$-lemma fails.
\end{proof}

Jost and Yau \cite{jost1993} introduced the concept of astheno-K\"ahler metric, which means an Hermitian metric $\omega'$ satisfying $\pt\bpt(\omega'^{n-2})=0$. In complex dimension 3, it reads $\pt\bpt\omega'=0$, which coincides with the notion of SKT (strong K\"ahler with torsion, also known as pluriclosed) metric. An import result is the following obstruction of astheno-K\"ahler metric found by Jost-Yau:
\begin{thm}\label{ob}(Jost-Yau \cite[Lemma 6]{jost1993})\\
Let $M$ be a compact astheno-K\"ahler manifold, then every holomorphic 1-form on $M$ is closed.
\end{thm}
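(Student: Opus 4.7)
The goal is to show that any holomorphic $1$-form $\xi$ on a compact astheno-K\"ahler $n$-fold $(M,\omega)$ satisfies $\ud\xi=0$. Since $\xi$ is holomorphic, $\bpt\xi=0$ automatically, so it is enough to prove $\pt\xi=0$.

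The plan is to integrate the $(n,n)$-form $\pt\xi\wedge\overline{\pt\xi}\wedge\omega^{n-2}$ over $M$ and show the result is zero. Because $\pt\xi$ is of type $(2,0)$, this top form is pointwise a nonnegative multiple of the volume form (no factor of $i$ is needed, as $i^{2^2}=1$), so vanishing of the integral will force $\pt\xi\equiv 0$ and hence $\ud\xi=0$.

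To compute the integral, I would perform two successive integrations by parts, using the standard fact that on a compact complex $n$-manifold, $\int_M\pt\alpha=0$ for every form $\alpha$ of type $(n-1,n)$, and $\int_M\bpt\beta=0$ for every $\beta$ of type $(n,n-1)$ (both follow from Stokes together with type considerations). Applied to $\alpha=\xi\wedge\bpt\bar{\xi}\wedge\omega^{n-2}$, the Leibniz rule together with $\pt\bpt\bar{\xi}=-\bpt\pt\bar{\xi}=0$ (which uses the conjugate of holomorphicity $\pt\bar{\xi}=0$) will give
\[
\int_M\pt\xi\wedge\bpt\bar{\xi}\wedge\omega^{n-2}=\int_M\xi\wedge\bpt\bar{\xi}\wedge\pt\omega^{n-2}.
\]
Applied next to $\beta=\xi\wedge\bar{\xi}\wedge\pt\omega^{n-2}$, Leibniz together with $\bpt\xi=0$ will give
\[
\int_M\xi\wedge\bpt\bar{\xi}\wedge\pt\omega^{n-2}=\int_M\xi\wedge\bar{\xi}\wedge\bpt\pt\omega^{n-2}=0,
\]
the final equality being precisely the astheno-K\"ahler hypothesis $\pt\bpt\omega^{n-2}=0$.

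The only part of the argument that demands genuine care is the bookkeeping: tracking the Leibniz signs and checking that each ``boundary'' term produced along the way either vanishes because $\xi$ is holomorphic or is eventually absorbed into the single surviving term $\pt\bpt\omega^{n-2}$. Once the two integration-by-parts steps are organized correctly, the astheno-K\"ahler assumption is invoked exactly once, at the very end, which is precisely where one expects it to enter.
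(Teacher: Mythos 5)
Your argument is correct and is exactly the standard Bochner-type proof of Jost--Yau's Lemma 6 (the paper itself only cites the result without reproving it): the pointwise nonnegativity of $\pt\xi\wedge\overline{\pt\xi}\wedge\omega^{n-2}$ for the $(2,0)$-form $\pt\xi$, together with the two Stokes/type arguments reducing its integral to $\int_M\xi\wedge\bar{\xi}\wedge\bpt\pt\omega^{n-2}=0$, forces $\pt\xi\equiv0$ and hence $\ud\xi=0$. The sign and type bookkeeping you flag does go through as stated, so there is no gap.
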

We shall point out that this obstruction is not enough. There exist a nilmanifold satisfying the condition of Theorem \ref{ob} which does not support any astheno-K\"ahler metric, as observed in \cite[Example 2.3]{fino2011}. This phenomenon will be manifested below as well.

A folklore conjecture says if a compact complex manifold admits both balanced and astheno-K\"ahler metrics or both balanced and SKT metrics (a priori they are different), then it must be K\"ahler. The SKT version of this conjecture has been solved in a few cases, including connected sums of $S^3\times S^3$ \cite{fu2012}, twistor spaces of anti-self-dual 4-manifolds \cite{verbitsky2014}, manifolds of Fujiki class $\mathcal{C}$ \cite{chiose2014} and nilmanifolds \cite{fino2015b, fino2015}.

To verify this conjecture for our $X$, we prove
\begin{thm}\label{folk}
$X$ is not astheno-K\"ahler/SKT.
\end{thm}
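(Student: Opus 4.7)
Assume for contradiction that $X$ carries an SKT metric $\omega'$. The Jost--Yau obstruction (Theorem \ref{ob}) is vacuous for $X$: by Proposition \ref{ddbar}, every holomorphic $(1,0)$-form on $X$ is pulled back from $\Sigma_g$ and is therefore automatically $\ud$-closed. A finer obstruction is required, one which exploits the variation of the fiber complex structure over $\Sigma_g$. My plan parallels the nilmanifold obstruction of \cite{fino2011, fino2015b} cited in the preceding paragraph. The first step is to average $\omega'$ over the fiberwise translation action of $T^4$: this action is holomorphic with respect to $J_0$ (the complex structure $\alpha I+\beta J+\gamma K$ on each fiber $\{z\}\times T^4$ is translation-invariant, since $I,J,K$ are parallel on $T^4$ and $\alpha,\beta,\gamma$ are constant along the fibers), and averaging preserves both the positivity of $\omega'$ and the equation $\pt\bpt\omega'=0$. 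Hence we may assume $\omega'$ is $T^4$-invariant.

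A $T^4$-invariant real $(1,1)$-form on $X$ is determined by a smooth matrix of functions on $\Sigma_g$ giving its components relative to the global frame $\{\ud z, e^4, e^5, e^6, e^7\}$. Using (\ref{complex structure}) together with the closedness of the $e^j$ on $T^4$, all derivatives of $\omega'$ reduce to derivatives of these matrix entries and of the Gauss-map components $\alpha,\beta,\gamma$. I would compute $\pt\bpt\omega'$ explicitly via the relations (\ref{relation}), and the identity $(\alpha\omega_I+\beta\omega_J+\gamma\omega_K)^2=2\,e^4\wedge e^5\wedge e^6\wedge e^7$ (a closed $T^4$-invariant volume form on the fiber) is a useful bookkeeping device. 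The SKT condition combined with the positivity of $\omega'$ then yields an over-determined system on $\Sigma_g$. The goal of the remaining analysis is to extract a single scalar identity of the form $\Delta f = Q$, where $Q$ is a quadratic expression in the Gauss-map energy density $|\alpha_z|^2+|\beta_z|^2+|\gamma_z|^2$ and the coefficients of $\omega'$; integrating over $\Sigma_g$, the left side vanishes by Stokes' theorem while the right side must evaluate to a positive multiple of $\deg G = g-1 > 0$ via the Gauss--Bonnet identity already invoked in Section \ref{s2}, producing the desired contradiction.

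The hard part is the extraction step. A $T^4$-invariant Hermitian $\omega'$ depends on many free functions on $\Sigma_g$, and the components of $\pt\bpt\omega'=0$ couple them in a nontrivial way. One must identify the correct linear combination whose integrand is both manifestly signed and topologically nontrivial, so that the obstruction carried by the nonvanishing of $\ud\alpha,\ud\beta,\ud\gamma$ cannot be absorbed by any choice of $T^4$-invariant metric. This is the analogue of the bracket-structure computation used by Fino--Parton--Salamon to obstruct astheno-K\"ahler metrics on nilmanifolds, and is the step I expect to be technically most involved.
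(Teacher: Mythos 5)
Your proposal correctly diagnoses the situation --- the Jost--Yau obstruction is vacuous here because all holomorphic $1$-forms on $X$ are pulled back from $\Sigma_g$, so a finer obstruction exploiting the variation of the fiberwise complex structure is needed --- and your preliminary averaging step is sound (the fiberwise $T^4$-translations are indeed holomorphic for $J_0$ and preserve both positivity and the SKT condition). But the step you yourself flag as ``the hard part'' is the entire mathematical content of the theorem, and you have not supplied it. As written, the argument is a plan with a placeholder where the obstruction should be: you assert that some linear combination of the components of $\pt\bpt\omega'=0$ must integrate against something topologically nontrivial and signed, but you do not identify which combination, nor verify that the resulting integrand actually has a sign. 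Without that, there is no proof. (Also, a minor miscalibration: the contradiction does not come from a Gauss--Bonnet count of $\deg G$; it comes from pointwise positivity of an explicit $(2,2)$-form, nonzero wherever $\ud G\neq 0$.)

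The paper's proof supplies exactly the missing ingredient, and it is shorter and avoids the averaging reduction entirely. Set $\rho^j=e^j-iJ_0e^j$ for $j=4,\dots,7$; these are global $(1,0)$-forms with $\ud\rho^j=-i\,\ud(J_0e^j)$ purely imaginary, which forces $\pt\rho^j=0$. Hence for any Hermitian $\omega'$ with $\pt\bpt\omega'=0$,
\[\int_X(\ud(J_0\rho^j))^2\wedge\omega'=\int_X\bpt\rho^j\wedge\pt\brho^j\wedge\omega'=\int_X\rho^j\wedge\brho^j\wedge\pt\bpt\omega'=0,\]
while a direct computation gives
\[\sum_{j=4}^7(\ud(J_0\rho^j))^2=-4\,G^*\omega_{\ccc\pp^1}\wedge(\alpha\omega_I+\beta\omega_J+\gamma\omega_K),\]
whose wedge with the positive form $\omega'$ is a nonpositive multiple of the volume form, strictly negative off the ramification fibers of $G$ --- contradiction. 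The test forms $\rho^j\wedge\brho^j$ are precisely the ``correct linear combination'' your plan calls for; note that they work against an arbitrary SKT metric, so the $T^4$-averaging you propose buys nothing. If you want to salvage your route, you would need to carry out the invariant-metric computation to the point of producing this same (or an equivalent) signed identity; until then the proposal has a genuine gap at its central step.
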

\begin{proof}
From Proposition \ref{ddbar}, we know that all the holomorphic 1-forms on $X$ are $\ud$-closed, therefore we cannot apply Theorem \ref{ob} directly. In addition, we know that $X$ is not of Fujiki class $\mathcal{C}$ from Proposition \ref{ddbar}, therefore this theorem is not covered by Chiose's result \cite{chiose2014}.

Let $\rho^j=e^j-iJ_0e^j$ for $j=4,5,6,7$. Clearly they are (1,0)-forms on $X$. Observe that \[\ud\rho^j=-i\ud(J_0e^j)\] is purely imaginary. On the other hand, $\ud\rho^j$ is of type (2,0)+(1,1), therefore we conclude that $\pt\rho^j=0$ and \[\bpt\rho^j=-i\ud(J_0e^j).\] Assume that $X$ admits an astheno-K\"ahler metric $\omega'$, then by integration by part, we have \[\int_X(\ud(J_0\rho^j))^2\wedge\omega'=\int_X\bpt\rho^j\wedge\pt\brho^j\wedge\omega'= \int_X\rho^j\wedge\brho^j\wedge\pt\bpt\omega'=0.\] On the other hand, explicit calculation shows that \[\sum_{j=4}^7(\ud(J_0\rho^j))^2=-4\ud\beta\wedge\ud\gamma\wedge\omega_I-4\ud\gamma\wedge\ud\alpha\wedge\omega_J -4\ud\alpha\wedge\ud\beta\wedge\omega_K.\] Observe that \[\frac{\ud\beta\wedge\ud\gamma}{\alpha}=\frac{\ud\gamma\wedge\ud\alpha}{\beta}=\frac{\ud\alpha\wedge\ud\beta}{\gamma} =\frac{i\ud\zeta\wedge\ud\bzeta}{(1+|\zeta|^2)^2}=G^*\omega_{\ccc\pp^1}\] is the pullback of the Fubini-Study metric by the Gauss map $G$. Therefore we have \[0=\sum_{j=4}^7\int_X(\ud(J_0\rho^j))^2\wedge\omega'= -4\int_XG^*\omega_{\ccc\pp^1}\wedge(\alpha\omega_I+\beta\omega_J+\gamma\omega_K)\wedge\omega'.\] This is in contradiction with the positivity of $\omega'$.
\end{proof}
\begin{rmk}
The result of Theorem \ref{folk} answers a question of Fu-Wang-Wu \cite[Section 1]{fu2013}. On the contrary, there exists 1-Gauduchon metrics on $X$, namely Hermitian metric $\omega'$ such that \[\pt\bpt\omega'\wedge\omega'=0.\] This is a theorem of Fu-Wang-Wu \cite[Corollary 20]{fu2013}.
\end{rmk}
\begin{rmk}
Extracting from the above argument, we can define a map \[u:H^1_{\textrm{dR}}(X;\rr)\to H^{1,1}_{\textrm{BC}}(X;\rr)\] on any compact complex manifold $(X,J)$, where $J$ is the complex structure. The map $u$ is defined to be \[u:[\rho]\mapsto[\ud(J\rho)],\] where $H^*_{\textrm{dR}}$ and $H^*_{\textrm{BC}}$ denote the de Rham cohomology and Bott-Chern cohomology respectively. If $X$ satisfies the $\pt\bpt$-lemma, then $u$ is identically zero. In general, $u$ detects the failure of $\pt\bpt$-lemma. This map $u$ is essentially related to the isomorphism \[\dfrac{H^1(X,\clo)}{H^1(X,\rr)}\cong\dfrac{\{\ud\textrm{-exact real }(1,1)\textrm{-forms}\}}{\{i\pt\bpt\psi,\psi\in C^\infty(X,\rr)\}},\] which is discussed in the proof of \cite[Proposition 1.6]{tosatti2015}.
\end{rmk}

\section{Solving Strominger System}

\subsection{Conformally Balanced Equation}

Recall from Section \ref{s2} that the naturally induced metric (\ref{metric}) \[\omega_0=\omega+\alpha\omega_I+\beta\omega_J+\gamma\omega_K\] is special balanced, therefore it solves (\ref{cb}). However, this metric is too restrictive for practical use. So we introduce a smooth function $f$ on $\Sigma_g$ and cook up a new metric \[\omega_f=e^{2f}\omega+e^f(\alpha\omega_I+\beta\omega_J+\gamma\omega_K).\] Obviously we have \[\|\Omega\|_{\omega_f}=e^{-2f}\|\Omega\|_{\omega_0}\] and \[\omega_f^2=2e^{3f}\omega\wedge(\alpha\omega_I+\beta\omega_J+\gamma\omega_K)+2e^{2f}e^4\wedge e^5\wedge e^6\wedge e^7.\] It follows that $\omega_f$ also solves the conformally balanced equation \[\ud(\|\Omega\|_{\omega_f}\omega_f^2)=0.\]

\subsection{Curvature Computation}

As we have worked out a local holomorphic frame in Section \ref{frame}, we are able to compute the term $\tr(R_f\wedge R_f)$ in (\ref{ac}) with respect to the Chern connection associated to $\omega_f$.

With respect to the local holomorphic frame $\{V_0,V_1,V_2\}$, the metric $\omega_f$ is given by the matrix \[H=2e^f\begin{pmatrix}e^f\lambda+|L_1|^2+|L_2|^2-i\alpha(L_1\bar{L}_2-L_2\bar{L}_1)&-L_1-i\alpha L_2&-L_2+i\alpha L_1\\ -\bar{L}_1+i\alpha\bar{L}_2&1&-i\alpha\\ -\bar{L}_2-i\alpha\bar{L}_1&i\alpha&1\end{pmatrix}.\] The inverse matrix can be computed accordingly \[H^{-1}=\frac{1}{2e^{2f}\lambda}\begin{pmatrix}1&L_1&L_2\\ \bar{L}_1&|L_1|^2&L_2\bar{L}_1\\ \bar{L}_2&L_1\bar{L}_2&|L_2|^2\end{pmatrix}+\frac{1}{2e^f(1-\alpha^2)}\begin{pmatrix}0&0&0\\ 0&1&i\alpha\\ 0&-i\alpha&1\end{pmatrix}.\]
Let \[\begin{split}p&=e^{2f}\lambda,\\ R&=\begin{pmatrix}1&&\\ &0&\\ &&0\end{pmatrix},\end{split}\] and set \[\begin{split}U&=\begin{pmatrix}
-L_1&-L_2\\ 1&0\\ 0&1\end{pmatrix}=\begin{pmatrix}
-L\\ \id\end{pmatrix},\\ S&=e^f\begin{pmatrix}1&-i\alpha\\ i\alpha&1\end{pmatrix},\\ V&=\begin{pmatrix}1\\ \bar{L}_1\\ \bar{L}_2\end{pmatrix}=\begin{pmatrix}1\\ \bar{L}^T\end{pmatrix}.\end{split}\]
We can express $H$ and $H^{-1}$ as \[\begin{split}H&=2pR+2US\bar{U}^T,\\ H^{-1}&=\frac{1}{2p}V\bar{V}^T+\frac{1}{2}\begin{pmatrix}0&\\&S^{-1}\end{pmatrix}.\end{split}\] Direct computation shows that \[\bar{H}^{-1}\pt\bar{H}=(p^{-1}\pt p)\begin{pmatrix}1&0\\ L^T&0\end{pmatrix}+p^{-1}\begin{pmatrix}\pt\bar{L}\\ L^T\pt\bar{L}\end{pmatrix}\begin{pmatrix}\bar{S}L^T&-\bar{S}\end{pmatrix}+\begin{pmatrix}0\\ \bar{S}^{-1}\pt \bar{S}\end{pmatrix}\begin{pmatrix}-L^T&\id\end{pmatrix}-\begin{pmatrix}0&0\\\pt L^T&0\end{pmatrix}.\] As a consequence, we have \[\begin{split}R_f=&\bpt(\bar{H}^{-1}\pt\bar{H})=(\bpt\pt\log p)\begin{pmatrix}1&0\\ L^T&0\end{pmatrix}-\pt\log p\wedge\begin{pmatrix}0&0\\ \bpt L^T&0\end{pmatrix}-\frac{\bpt p}{p^2}\begin{pmatrix}\pt\bar{L}\\ L^T\pt\bar{L}\end{pmatrix}\begin{pmatrix}\bar{S}L^T&-\bar{S}\end{pmatrix}\\ +&p^{-1}\begin{pmatrix}\bpt\pt\bar{L}\\ \bpt(L^T\pt\bar{L})\end{pmatrix}\begin{pmatrix}\bar{S}L^T&-\bar{S}\end{pmatrix}-p^{-1}\begin{pmatrix}\pt\bar{L}\\ L^T\pt\bar{L}\end{pmatrix}\begin{pmatrix}\bpt(\bar{S}L^T)&-\bpt\bar{S}\end{pmatrix}+\begin{pmatrix}0\\ \bpt(\bar{S}^{-1}\pt\bar{S})\end{pmatrix}\begin{pmatrix}-L^T&\id\end{pmatrix}\\ +&\begin{pmatrix}0\\ \bar{S}^{-1}\pt\bar{S}\end{pmatrix}\begin{pmatrix}\bpt L^T&0\end{pmatrix}-\begin{pmatrix}0&0\\ \bpt\pt L^T&0\end{pmatrix}.\end{split}\]
From this we can see immediately that \[\tr(R_f)=4\bpt\pt f.\] A even more complicated calculation reveals that \[\begin{split}\tr(R_f\wedge R_f)&=2\left[-\frac{\bpt\pt\log p}{p}\pt\bar{L}\cdot\bar{S}\cdot\bpt L^T-\frac{\pt\log p\wedge\bpt p}{p^2}\pt\bar{L}\cdot\bar{S}\cdot\bpt L^T+\frac{\pt\log p}{p}\bpt\pt\bar{L}\cdot\bar{S}\cdot\bpt L^T\right.\\ &-\frac{\pt\log p}{p}\pt\bar{L}\cdot\bpt\bar{S}\cdot\bpt L^T+\frac{\bpt p}{p^2}\pt\bar{L}\cdot\pt\bar{S}\cdot\bpt L^T-\frac{\bpt p}{p^2}\pt\bar{L}\cdot\bar{S}\cdot\bpt\pt L^T+\frac{1}{p}\pt\bar{L}\cdot\bar{S}\bpt(\bar{S}^{-1}\pt\bar{S})\bpt L^T\\ &-\frac{1}{p}\bpt\pt\bar{L}\cdot\pt\bar{S}\cdot\bpt L^T+\frac{1}{p}\bpt\pt\bar{L}\cdot\bar{S}\cdot\bpt\pt L^T+\left.\frac{1}{p}\pt\bar{L}\cdot\bpt\bar{S}\cdot\bar{S}^{-1}\pt\bar{S}\cdot\bpt L^T-\frac{1}{p}\pt\bar{L}\cdot\bpt\bar{S}\cdot\bpt\pt L^T\right].\end{split}\] Let $W=\pt\bar{L}\cdot\bar{S}\bpt L^T$. After a recombination of terms, we get a very simple expression \[\begin{split}\tr(R_f\wedge R_f)&=-\frac{2}{p}\left[(\bpt\pt\log p+\pt\log p\wedge\bpt\log p)W-\pt\log p\wedge\bpt W+\bpt\log p\wedge\pt W-\bpt\pt W\right]\\ =2\bpt\pt\left(\frac{W}{p}\right).\end{split}\] Recall that $\bpt L$ can be read off from (\ref{dbar}), hence we are able to calculate this term explicitly as \[\frac{W}{p}=-\frac{2i}{e^f\lambda}\frac{|\alpha_z|^2}{\beta^2+\gamma^2}(\alpha\omega_I+\beta\omega_J+\gamma\omega_K)= -\frac{i}{4e^f}\|\ud G\|^2(\alpha\omega_I+\beta\omega_J+\gamma\omega_K),\] where $G:\Sigma_g\to\ccc\pp^1$ is the Gauss map. Clearly it is globally defined.

\subsection{Solving the Whole System}

A crucial consequence of the lengthy calculation above is that $\tr(R_f\wedge R_f)$ is $\pt\bpt$-exact. Therefore we can simply take $F\equiv0$ to solve the Hermitian-Yang-Mills (\ref{hym}) without violating the cohomological restriction in (\ref{ac}).

We also observe that \[i\pt\bpt\omega_f=i\pt\bpt(e^f(\alpha\omega_I+\beta\omega_J+\gamma\omega_K)).\] Therefore by equating \[e^{2f}=\frac{\alpha'}{8}\|\ud G\|^2,\] we solve the whole Strominger system with $F\equiv0$.

Unfortunately $\|\ud G\|^2$ vanishes at the ramification points, at which $f$ goes to $-\infty$, thus the metric $\omega_f$ is degenerate at the fibers of $\pi$ over these ramification points. The ramification locus is unavoidable and related issues will be addressed in next section. What we get is a degenerate solution to the Strominger system.

\section{The Geometry of Degenerate Locus and Further Discussions}

A simple application of Riemann-Hurwitz formula shows that $G$ has $4(g-1)$ ramification points counted with multiplicity. These are exactly the zeroes of Gauss curvature of $\Sigma_g$. It is a very interesting question if we can find a nontrivial lower bound for number of zeroes of Gauss curvature when counting without multiplicity.

Now let us consider the case $\Sigma_g\looparrowright T^3$ is a hyperelliptic minimal surface of genus $g$. Meeks showed that $g$ must be odd. More importantly, Meeks \cite[Proposition 3.1, Theorem 3.2]{meeks1990} observed that the hyperelliptic automorphism of $\Sigma_g$ is an isometry that is induced by an inversion symmetry in $T^3$ through any hyperelliptic point. Furthermore, after a translation in $T^3$ one can manage to locate every hyperelliptic point of $\Sigma_g$ inside the set of order 2 points in $T^3$. In an ideal case with $g=3$, the 8 hyperelliptic points will be exactly the 8 order 2 points in $T^3$.

From this we see that $\|\ud G\|^2$ is invariant under the hyperelliptic automorphism $\tau$, hence our solution to the anomaly cancellation equation (\ref{ac}) descends to $X/\langle\tau\rangle$, which is a $T^4$ fibration over $\ccc\pp^1$ with orbifold singularities. However, we notice that $\tau^*\Omega=-\Omega$, therefore what we really have on $X/\langle\tau\rangle$ is a solution to the  ``twisted'' Strominger system.

In summary, on non-K\"ahler Calabi-Yau 3-folds of the type $\Sigma_g\times T^4$, we are able to write down explicit solutions to the whole Strominger system with degeneracies. A curious feature is that infinitely many topological types occur in these models. Nevertheless, there are many interesting questions left unanswered. For example, one may ask if there are any interesting physics behind the degenerate locus. If not, is it possible to apply a perturbation argument to get rid of the degeneracies? Or maybe more importantly, can one add in nontrivial $\tr(F\wedge F)$-term to fix these degeneracies? Preferably $F$ may come from anti-self-dual instantons on $T^4$. By the famous Atiyah-Ward correspondence \cite{atiyah1978}, the pullback of such instantons to $X$ have holomorphic structures satisfying the Hermitian-Yang-Mills Equation (\ref{hym}) automatically for our ansatz $\omega_f$. However, the anomaly cancellation (\ref{ac}) still awaits to be handled.

On the other hand, since the degeneracies concentrate on the fibers over ramification points, it is natural to consider the Strominger system on the twistor space instead of on the pull-back space $X$. However, a twistor space can never have trivial canonical bundle, therefore further modifications are needed. This idea will be developed in \cite{fei2015d} where local models of torsional heterotic strings are described.

\bibliographystyle{alpha}

\bibliography{C:/Users/Piojo/Dropbox/Documents/Source}

\end{document}